\pgfplotsset{compat=1.15, ticks=none}
\newtheorem{theorem}{Theorem}[section]
\newtheorem{lemma}[theorem]{Lemma}
\newtheorem{corollary}[theorem]{Corollary}
\newtheorem{proposition}[theorem]{Proposition}
\theoremstyle{definition}
\newtheorem{definition}[theorem]{Definition} 
\newtheorem{example}[theorem]{Example}
\theoremstyle{remark}
\newtheorem{remark}[theorem]{Remark}
\numberwithin{equation}{section}
\renewcommand\bigskip{\medskip}
\def\cF{\mathcal F}
\def\N{\mathbb N}
\def\R{\mathbb R}
\def\Z{\mathbb Z}
\def\vep{\varepsilon}
\def\rmin{r_{\min}}
\DeclareMathOperator{\dimH}{dim_H}
\DeclareMathOperator{\dimA}{dim_A}
\DeclareMathOperator{\dimB}{dim_B}
\DeclareMathOperator{\udimB}{\overline{dim}_B}
\DeclareMathOperator{\dist}{dist}
\DeclareMathOperator{\diam}{diam}
\DeclareMathOperator{\Pen}{Pen}
\DeclareMathOperator{\Int}{Int}
\DeclareMathOperator{\Id}{Id}
\DeclareMathOperator{\graph}{graph}
\DeclareMathOperator{\dH}{d_H}
\DeclareMathOperator{\Conv}{Conv}
\newcommand{\bi}{{\bf i}}
\newcommand{\bj}{{\bf j}}
\begin{document}

\title[Visible parts of self-similar sets]{On dimensions of visible parts of
  self-similar sets with finite rotation groups}

\author[Esa J\"arvenp\"a\"a]{Esa J\"arvenp\"a\"a}
\address{Department of Mathematical Sciences, P.O. Box 3000, 90014 
University of Oulu, Finland}
\email{esa.jarvenpaa@oulu.fi}

\author[Maarit J\"arvenp\"a\"a]{Maarit J\"arvenp\"a\"a}
\address{Department of Mathematical Sciences, P.O. Box 3000, 90014 
University of Oulu, Finland}
\email{maarit.jarvenpaa@oulu.fi}

\author[Ville Suomala]{Ville Suomala}
\address{Department of Mathematical Sciences, P.O. Box 3000, 90014 
University of Oulu, Finland}
\email{ville.suomala@oulu.fi}

\author[Meng Wu]{Meng Wu}
\address{Department of Mathematical Sciences, P.O. Box 3000, 90014 
University of Oulu, Finland}
\email{meng.wu@oulu.fi}

\thanks{The fourth author is supported by the Academy of Finland, project grant
No. 318217. We acknowledge the support of the Centre of Excellence in Analysis
and Dynamics Research funded by the Academy of Finland.}

\subjclass[2010]{primary 28A80; secondary 28D05, 37A05.}
\keywords{Visible part, dimension, self-similar set}

\begin{abstract}
We derive an upper bound for the Assouad dimension of visible parts of
self-similar sets generated by iterated function systems with finite rotation
groups and satisfying the weak separation condition. The bound is valid for all
visible parts and it depends on the direction and the penetrable part of the set, which is a
concept defined in this paper. As a corollary, we obtain in the planar case that
if the projection is a finite or countable union of intervals then the visible
part is 1-dimensional. We also prove that the Assouad dimension of a visible
part is strictly smaller than the Hausdorff dimension of the set provided the
projection contains interior points.  Our proof relies on Furstenberg's dimension conservation principle for self-similar sets. 
\end{abstract}

\maketitle


\section{Introduction}\label{intro}

The visible part of a compact set $K\subset\R^d$ from a hyperplane $L$ consists
of those points of $K$ which one can see from $L$ (for a precise definition, see
\eqref{visibledef}). This definition extends naturally to $k$-dimensional planes
$L$ for $k=0,\dots,d-1$ but, in this paper, we concentrate on the case $k=d-1$.
We are interested in the dimensions of visible parts. From the definition it
follows easily that the orthogonal projection of $K$ onto $L$ equals that of the
visible part. Therefore, the Hausdorff, packing and box counting dimensions of a visible part are always between the correponding
dimensions of the set $K$ and its projection. In general, this
is all what can be said, that is, given any two numbers $t<s$ such that there
exists a set with dimension $s$ whose projection has dimension $t$ then, for
every $u\in[t,s]$, one can construct a set whose visible part has dimension $u$.
In order to obtain some nontrivial results, one has to study several visible
parts simultaneously. It follows from the Marstrand-Mattila projection theorem
\cite{Mar,Mat75} (see also \cite[Corollary 9.4]{Mat95}) that the Hausdorff
dimension of a visible part equals that of the set $K$ for almost all visible
parts provided the Hausdorff dimension of $K$ is at most $d-1$ and almost all
visible parts are at least $(d-1)$-dimensional if the Hausdorff dimension of $K$
is larger than $d-1$. This was shown in \cite{JJMO}, where a more general result
concerning $k$-planes was proved. 

According to the well-known visible part conjecture (see
\cite[Problem 11]{Mat04}) almost all visible parts are $(d-1)$-dimensional
provided the set has Hausdorff dimension at least $d-1$. An example of Davies and Fast \cite{DF} shows that the measure theoretic notation ``almost all'' cannot be replaced by a
topological typicality, that is, there is a compact 2-dimensional set in the
plane whose visible part is the whole set from a dense $G_\delta$-set of lines
$L$. Orponen \cite{O14}, in turn, proved in the plane that this phenomenon,
where the visible part is the whole set $K$, may happen only for a set of
directions whose Hausdorff dimension is at most $2-\dimH(K)$, where $\dimH$
stands for the Hausdorff dimension. In \cite{JJMO}, the visible part conjecture was proved for quasicircles, graphs of continuous functions and some
special examples of self-similar sets. In \cite{AJJRS}, it was shown to be
almost surely valid for the fractal percolation. Falconer and Fraser \cite{FF}
proved that if $K\subset \R^2$ is a self-similar set satisfying the convex open
set condition and if the projection of $K$ is an interval for all lines, then
the Hausdorff and box counting dimensions of all visible parts are equal to 1.
In the case when there are no rotations in the defining iterated function
system of $K$, they showed that if the projection of $K$ onto a line $L$ is an
interval, then the Hausdorff and box counting dimensions of the visible part
from $L$ are equal to 1. Rossi \cite{R} proved the visibility conjecture for
self-affine sets satisfying a projection condition and a property called strong
cone separation. As far as we know, these are the only cases, where the
visible part conjecture has been proved. However, there are several results
which show that a typical visible part is strictly smaller than the set itself.
A result of \cite{JJN} states that almost all visible parts of a compact
$s$-set, that is, a compact set having positive and finite $s$-dimensional
Hausdorff measure, have zero $s$-dimensional Hausdorff measure 
provided $s>d-1$. O'Neil
\cite{ON} studied planar continua and proved an explicit nontrivial upper bound
for the Hausdorff dimension of almost all visible parts from points, that is,
from 0-planes. Quite recently, Orponen \cite{O20} proved a general theorem
according to which the Hausdorff dimensions of almost all visible parts are at
most $d-\frac 1{50d}$ for all compact sets in $\R^d$. In particular,
if the Hausdorff dimension of a set in $\R^d$ is larger than $d-\frac 1{50d}$,
then the Hausdorff dimensions of almost all visible parts are strictly smaller
than the Hausdorff dimension of the set itself.

In this paper, we study dimensions of visible parts of self-similar sets $K$
generated by iterated function systems having finite rotation groups and
satisfying the weak separation condition. In Theorem~\ref{main}, we prove an upper
bound for the Assouad dimension of visible parts, which depends on the
direction and on the penetrable part of the set (for definition, see \eqref{penetrable}). As a
corollary, we obtain a criterion which guarantees the validity of the visible
part conjecture for a fixed hyperplane $L$, see Corollary~\ref{corollary}. In
particular, for planar self-similar sets generated by iterated function systems
with finite rotation groups and satisfying the weak separation condition, the visible
part from a line $L$ is 1-dimensional provided the projection of $K$ onto $L$
is a finite or countable union of intervals.  Under the above assumptions, we
also show that the Assouad dimension of a visible part is strictly smaller than
the Hausdorff dimension of the set provided the projection contains interior
points and the Hausdorff dimension of $K$ is strictly larger than $d-1$, see
Theorem~\ref{theorem2}. In the case of trivial rotation
group, we prove that if the projection satisfies the weak separation condition,
then the upper box counting dimension of the visible part equals the Hausdorff
dimension of the projection, see Proposition~\ref{WSC}. Finally,
in Section~\ref{examples}, we give examples and discuss the role of our
assumptions.

\section{Results}\label{results}

Fix a finite set $\Lambda$ and an integer $d\in\N\setminus\{1\}$, where
$\N:=\{1,2,\dots\}$. Set $\Lambda^*:=\bigcup_{n=0}^\infty\Lambda^n$, where
$\Lambda^0$ is the empty word. Let $\cF:=\{f_i(x):=r_iO_i(x)+t_i\}_{i\in \Lambda}$
be an iterated function system on $\R^d$, where $0<r_i<1$, $O_i$ is an element
of the orthogonal group $\textrm{O}(d,\R)$ and $t_i\in\R^d$ for all
$i\in\Lambda$. Denote by $K$ the attractor of $\cF$, that is, $K$ is the unique
nonempty compact set in $\R^d$ satisfying 
\[
 K=\bigcup_{i\in \Lambda}f_i(K).
\]
Since all maps in $\cF$ are similarity transformations, $K$ is said to be a
\emph{self-similar set}. Equip $\Lambda^\N$ with the metric
\[
 d(\bi,\bj):=\prod_{n=1}^{|\bi\wedge\bj|}r_{i_n},
\]
where $\bi\wedge\bj$ is the longest common prefix of the sequences $\bi$ and
$\bj$ and $|u|$ is the length of a word $u\in\Lambda^*$. Note that this metric
generates the product topology on $\Lambda^\N$. For $u\in\Lambda^*$, we write
$f_u:=f_{u_{1}}\circ\dots\circ f_{u_{|u|}}$ and define the cylinder
\[
 [u]:=\{i\in\Lambda^\N\mid i|_{|u|}=u\},
\]
where $i|_n:=i_1\dots i_n$ for $n\in\N$. Let $\pi\colon\Lambda^\N\to K$
be the coding map associated to $\cF$, that is, for all $\bi\in\Lambda^\N$, we
define
\[
 \pi(\bi):=\lim_{n\to\infty}f_{i_1}\circ f_{i_2}\circ\cdots\circ f_{i_n}(0).
\]
The \emph{rotation group} of $\cF$, denoted by $G(\cF)$, is the closure of the
subgroup of $\textrm{O}(d,\R)$ generated by $\{O_i\mid i\in\Lambda\}$.

\begin{definition}
A self-similar set $K$ generated by an iterated function system
$\cF:=\{f_i\}_{i\in\Lambda}$ satisfies the \emph{weak separation condition} if
the identity is not an accumulation point of the set
$\{f_u^{-1}\circ f_v\mid u,v\in\Lambda^*, u\ne v\}$ in the space of similarities
equipped with the topology of pointwise convergence. 
\end{definition}

\begin{remark}\label{dimpreserves}
(a) We note that there are several different versions of the weak
separation condition, most of them being equivalent, see \cite{BG,LN,Z}.
  
(b) For all $z\in\R^d$ and all $r>0$, let
$Q(z,r):=r[0,1]^d+z-\frac r2(1,\dots,1)$ be the closed cube centred at $z$ and
with side length $r$. We record here a useful fact \cite[Theorem 1 (4a)]{Z}
concerning iterated function systems satisfying the weak separation
condition: there exist $M\in\N$ and $0<c<1$ such that, for all
$z\in K$ and all $r>0$, there exist $u_1,u_2,\ldots,u_M\in\Lambda^*$ satisfying
$cr\le\diam([u_j])\le r$ for all $j=1,\dots,M$ and
\[
 Q(z,r)\cap K\subset\bigcup_{j=1}^Mf_{u_j}(K).
\]

\end{remark}

For $z\in\R^d\setminus\{0\}$, let $\langle z\rangle$ be the line spanned by $z$.
For $\theta\in S^{d-1}$ and $y\in\langle\theta\rangle$, let $L_\theta$ be the
linear subspace perpendicular to $\theta$ and set $L_{\theta,y}:=y+L_\theta$.
Denote by $P_{\theta,y}$ the projection from $\R^d$ onto $L_{\theta,y}$, that is,
$P_{\theta,y}(z)$ is the point of $L_{\theta,y}$ that is closest to $z$. In the case
$y=0$, we simply write $P_\theta$. Given a compact set $F\subset\R^d$,
a direction $\theta\in S^{d-1}$ and a point $y\in\langle\theta\rangle$ such that
$L_{\theta,y}\cap F=\emptyset$, the \emph{visible part of $F$ from $L_{\theta,y}$}
is the set
\begin{equation}\label{visibledef}
  V_{\theta,y}(F):=\{z\in F\mid [z,P_{\theta,y}(z)]\cap F=\{z\}\},
\end{equation}
where $[z,z']$ is the line segment connecting the points $z,z'\in\R^d$.
Heuristically, $V_{\theta,y}(F)$ consists of those points of $F$ which one can
see from the hyperplane $L_{\theta,y}$ while looking out perpendicular to the
plane. For a more general definition of visible parts, see \cite{JJMO}.

For a set $F\subset\R^d$, denote its interior by $\Int(F)$. For
every $\theta\in S^{d-1}$, we define the
\emph{penetrable part of $F$ in direction $\theta$} by setting
\begin{equation}\label{penetrable}
 \Pen_\theta(F):=F\cap P_\theta^{-1}\bigl(P_\theta(F)\setminus\Int(P_\theta(F))\bigr).
\end{equation}

\begin{remark}\label{cantorval}
In the case $G(\cF)=\{\Id\}$, the projection of $K$ is a self-similar set. We
remark that there are self-similar sets with nonempty interior, whose boundary
is a Cantor set (see e.g. \cite{BFGPS}). A planar self-similar set with
an interesting projectional structure is presented in Example~\ref{Meng}. 
\end{remark}

We denote by $\dimA$ the Assouad dimension, by $\dimB$ the box counting
dimension and by $\udimB$ the upper box counting dimension. Recall that
$\dimH(F)\le\udimB(F)\le\dimA(F)$ for all bounded sets $F\subset\R^d$. We are
now ready to state our main theorem. 

\begin{theorem}\label{main}
Let $K\subset\R^d$ be a self-similar set generated by an iterated function
system $\cF$ with a finite rotation group $G(\cF)$ and satisfying the  weak separation condition. Then, for all $\theta\in S^{d-1}$ and $y\in\langle\theta\rangle$ with
$L_{\theta,y}\cap K=\emptyset$, we have that 
\begin{equation}\label{eq:main}
  \dimA(V_{\theta,y}(K))\le\max\{d-1,\max_{g\in G(\cF)}\dimH(\Pen_{g(\theta)}(K))\}.
\end{equation}
\end{theorem}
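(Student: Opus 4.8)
The plan is to run a covering argument directly in the symbolic space $\Lambda^\N$ and transfer the conclusion to $K$ using the dimension-preservation property from Remark~\ref{dimpreserves}. Fix $\theta$ and $y$ with $L_{\theta,y}\cap K=\emptyset$, write $V:=V_{\theta,y}(K)$, and let $s$ denote the right-hand side of \eqref{eq:main}. To bound $\dimA(V)$ by $s$, I would fix a point $z\in V$ and a scale $r>0$ and estimate the number of cylinders of size comparable to $\rho r$ (for small $\rho$) needed to cover $\pi^{-1}(V\cap Q(z,r))$; by Remark~\ref{dimpreserves} and the definition of Assouad dimension, it suffices to show this number is at most $C\rho^{-s}$ with $C$ independent of $z$ and $r$. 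The key geometric input is that a point $w\in K$ lying in $V$ ``screens'' everything directly behind it: if $w\in V_{\theta,y}(K)$, then the open segment from $w$ towards $L_{\theta,y}$ misses $K$, so near $w$ the set $K$ can only accumulate on the far side and on $P_\theta^{-1}(P_\theta(w))$ itself. This is where the penetrable part enters: the ``bad'' directions are those $g(\theta)$, $g\in G(\cF)$, arising from the finitely many orientations a cylinder $f_u(K)$ can have, and a cylinder that is not essentially transverse to $\theta$ contributes only through $\Pen_{g(\theta)}(K)$.

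More concretely, I would proceed as follows. First, decompose $\pi^{-1}(Q(z,r))$ into $O(1)$ cylinders $[u]$ with $\diam[u]\asymp r$, using the bounded-overlap statement in Remark~\ref{dimpreserves}; it is enough to treat a single such cylinder $[u]$, so after rescaling we may assume we are looking at a copy $f_u(K)=r_u O_u(K)+t_u$ of $K$ with orientation $O_u\in G(\cF)$, and we must cover $f_u(K)\cap V$ by cylinders of relative size $\rho$. Second, partition $f_u(K)$ into subcylinders $f_{uv}(K)$ with $\diam f_{uv}(K)\asymp\rho\,\diam f_u(K)$; there are at most $O(\rho^{-\dimH K})$ of these in total, but that is far too many, so the point is to discard most of them. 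A subcylinder $f_{uv}(K)$ can meet $V$ only if no point of $f_{uv}(K)$ is screened by a point of $K$ lying closer to $L_{\theta,y}$; in particular, projecting along $\theta$, the ``shadow'' $P_\theta(f_{uv}(K))$ of a surviving subcylinder must avoid the interior of $P_\theta$ of the part of $K$ sitting between it and the plane. Third — and this is the heart of the matter — I would argue that for all but $O(\rho^{-(d-1)})$ of the subcylinders, the only way to survive is to have $P_{O_u^{-1}\theta}(f_v(K))$ meet the boundary $P_{O_u^{-1}\theta}(K)\setminus\Int P_{O_u^{-1}\theta}(K)$, i.e. $f_v(K)\cap\Pen_{O_u^{-1}\theta}(K)\neq\emptyset$, because a subcylinder whose shadow lands strictly inside $\Int P_\theta(f_u(K))$ is screened by material of $f_u(K)$ itself (using $L_{\theta,y}\cap K=\emptyset$ to fix which side is ``in front''). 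Writing $g:=O_u^{-1}$, ranging over the finitely many elements of $G(\cF)$, the number of surviving subcylinders is then at most $O(\rho^{-(d-1)})$ plus the number needed to cover $\Pen_{g(\theta)}(K)$ at scale $\rho$, which is $O(\rho^{-\dimH\Pen_{g(\theta)}(K)-\vep})$ for every $\vep>0$ by definition of Hausdorff dimension together with the fact that for self-similar sets with the open set condition the relevant covering exponent is controlled by the Hausdorff dimension. Combining, the count is $O(\rho^{-s-\vep})$, and letting $\vep\to0$ and iterating the argument across scales (the estimate is uniform in $z$ and $r$) yields $\dimA(V)\le s$.

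The main obstacle I anticipate is making precise and uniform the screening/transversality dichotomy in the third step: one must control, uniformly over the base cylinder $[u]$ and its orientation, \emph{which} material of $K$ lies between a given subcylinder and $L_{\theta,y}$, and show that if a subcylinder's $\theta$-shadow is interior to the projection of some ancestor cylinder then that ancestor already blocks it — this needs the open set condition to prevent pathological overlaps, the hypothesis $L_{\theta,y}\cap K=\emptyset$ to consistently orient ``front'' versus ``back,'' and a compactness argument over the finite set $G(\cF)$ to get a single constant. A secondary technical point is the passage from a Hausdorff-dimension bound on $\Pen_{g(\theta)}(K)$ to a genuine covering (box-type) estimate at dyadic scales: this is legitimate here because $\Pen_{g(\theta)}(K)$ is the intersection of $K$ with a $P_{g(\theta)}$-saturated set, and on such sets the covering numbers of $K$ at scale $\rho$ grow no faster than $\rho^{-\dimH(\Pen_{g(\theta)}(K))-\vep}$ owing to the self-similar structure and the open set condition; I would isolate this as a short lemma before the main argument.
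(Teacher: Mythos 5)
Your covering framework for the Assouad dimension is sound in outline, but the ``screening/transversality dichotomy'' in your third step --- which you correctly identify as the heart of the matter --- does not hold, and this is a genuine gap rather than a technical one. The claim that a subcylinder $f_{uv}(K)$ whose $\theta$-shadow lies in $\Int(P_\theta(f_u(K)))$ is ``screened by material of $f_u(K)$ itself'' is false: that hypothesis only guarantees that every line in direction $\theta$ through that shadow meets $f_u(K)$ \emph{somewhere}, not that it meets $f_u(K)$ at a point strictly closer to $L_{\theta,y}$ than $f_{uv}(K)$. The subcylinder may itself be the frontmost part of $f_u(K)$ over its shadow, in which case it contains visible points even though its shadow is interior; note that the visible part of $f_u(K)$ alone already projects onto all of $P_\theta(f_u(K))$, and the frontmost points over neighbouring $\rho$-columns can sit at wildly different depths. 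Consequently the number of ``surviving'' subcylinders inside one copy of $K$ is precisely the $\rho$-covering number of the visible part of that copy --- the very quantity you are trying to bound --- and the argument is circular. No uniform constant obtained by compactness over $G(\cF)$ repairs this.

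The paper circumvents exactly this obstruction with Furstenberg's dimension conservation principle (Theorem~\ref{thm-DC}). It passes to the closure $E$ of the visible part, shows that every micro-set of $E$ is covered by boundedly many sets $\overline{V_{\tau,z}(K)}$ with $\tau$ in the $G(\cF)$-orbit of $\theta$ (this is where the open set condition and the finiteness of $G(\cF)$ are used), and applies dimension conservation to a micro-set of Hausdorff dimension at least $\dimA(E)$. Assuming $\dimA(E)>d-1$ forces $\delta>0$, which yields, over a set $D$ of base points with $\dimH(D)\ge\dimA(E)-\delta$, fibers $C_x$ of dimension at least $\delta-\vep$ lying a definite distance $\gamma$ \emph{below} another point $w$ of $\overline{V_{\tau,z}(K)}$ on the same line. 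Only at this stage does the interior-versus-boundary dichotomy become valid, and it is applied to fiber points: if $w'\in C_x$ lay in a cylinder $f_u(K)$ of diameter less than $\gamma/2$ with $P_\tau(w')\in\Int(P_\tau(f_u(K)))$, then $f_u(K)$ would screen $w$, a contradiction, so $w'\in\Pen_\tau(f_u(K))$; Marstrand's product inequality then combines the fiber and base dimensions into \eqref{eq:main}. Your secondary lemma (that covering numbers of $\Pen_{g(\theta)}(K)$ are governed by its Hausdorff dimension) is plausible via a graph-directed, sub-self-similar argument as in the proof of Theorem~\ref{theorem2}, but it is not the missing piece: what your proposal lacks is a mechanism, such as dimension conservation, that produces a single positive-dimensional fiber of the visible part sitting beneath a visible point.
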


\begin{remark}
We note that the statement of Theorem~\ref{main} is general.
However, it gives a useful bound for $\dimA(V_{\theta,y}(K))$ only when
$\Int(P_{g(\theta)}(K))\neq \emptyset$ for all $g\in G(\cF)$, for if
$\Int(P_{g(\theta)}(K))=\emptyset$ for some $g\in G(\cF)$, then
$\Pen_{g(\theta)}(K)=K$. Recall that $\dimH(K)=\dimA(K)$ by the weak
separation condition (see \cite[Theorem 1.4]{FHOR}).
\end{remark}

We have an immediate corollary of Theorem~\ref{main}.

\begin{corollary}\label{corollary}
Let $K\subset\R^d$ be a self-similar set generated by an iterated function
system $\cF$ with a finite rotation group $G(\cF)$ and satisfying
the  weak separation condition.   Fix $\theta\in S^{d-1}$. If
$\dimH(P_{g(\theta)}(\Pen_{g(\theta)}(K)))\le d-2$ for all $g\in G(\cF)$, then  
\[
 \dimA(V_{\theta,y}(K))\le d-1
\]
for all $y\in\langle\theta\rangle$ with $L_{\theta,y}\cap K=\emptyset$. In
particular, if $d=2$ and if $P_{g(\theta)}(K)$ is a finite or countable union of
intervals for every $g\in G(\cF)$, then 
\[
 \dimH(V_{\theta,y}(K))=\dimA(V_{\theta,y}(K))=1.
\]
\end{corollary}

\begin{proof}
The claim follows since $\dimH(A)\le\dimH(P_{\theta,y}(A))+1$ for all sets
$A\subset\R^d$ (see e.g. \cite[Theorem 8.10]{Mat95}).
\end{proof}

\begin{remark}\label{OSCremark}
Recall that Falconer and Fraser \cite{FF} proved that all visible parts of a
self-similar set have Hausdorff and box counting dimension equal to 1 provided
that the defining iterated function system satisfies the convex open set
condition and all projections of the self-similar set are intervals. These
assumptions play a crucial role in \cite{FF}. Corollary~\ref{corollary} shows
that the assumption of convexity of the open set is not needed, at least in the
case of a finite rotation group. Further, the condition that every projection is
a single interval may be relaxed to a finite or countable union of intervals.
We remind that our method is completely different from that of \cite{FF}. Also
Rossi's work \cite{R} on visible parts of self-affine sets relies essentially on
the assumption that the projection of the attractor is a single interval for
many directions. Furthermore, the strong cone separation condition used in
\cite{R} is never valid for self-similar sets.
\end{remark}

Before the proof of Theorem~\ref{main}, we introduce the Furstenberg's dimension
conservation principle for compact sets (see \cite{Fu08}), which is a key
ingredient in our proof. For a set $E\subset\R^d$ and $\theta\in S^{d-1}$, we say
that the projection $P_\theta\colon\R^d\to L_\theta$ is
\emph{dimension conserving} for $E$ if for some $\delta\ge 0$,
\begin{equation}\label{DC}
  \delta+\dimH\left(\left\{x\in L_\theta\mid\dimH\left(P_\theta^{-1}(x)
  \cap E\right)\ge\delta\right\}\right)\ge\dimH(E).
\end{equation}
Here we adopt the convention that the dimension of the empty set is $-\infty$. 
Observe that one could replace the latter $\ge$-sign in \eqref{DC} by equality,
since the left hand side can never be strictly larger than the right hand side
by a classical theorem of Marstrand \cite{Mar2}, (see also
\cite[Corollary 2.10.27]{Fed}), but we do not need that information in our
paper.
Let $E$ be a closed subset of $[0,1]^d$. A closed set $A\subset [0,1]^d$ is a
\emph{mini-set} of $E$ if $A\subset (\lambda E+x)\cap [0,1]^d$ for some scalar
$\lambda\ge 1$ and $x\in\R^d$. A closed set $B\subset [0,1]^d$ is a
\emph{micro-set} of $E$ if there is a sequence $(A_n)$ of mini-sets of $E$ with
$\lim_{n\to\infty}A_n=B$ in the Hausdorff metric. 

The following result is a special case of \cite[Theorem 6.1]{Fu08}. 

\begin{theorem}\label{thm-DC}
For every compact set $E\subset \R^d$ and for every $\theta\in S^{d-1}$, there is
a micro-set $B$ of $E$ with $\dimH(B)\ge\dimA(E)$ for which $P_\theta$ is
dimension conserving.   
\end{theorem}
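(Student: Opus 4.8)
The plan is to deduce this from Furstenberg's dimension conservation principle via the notion of a \emph{homogeneous} micro-set. After translating and rescaling we may assume that $E\subset[0,1]^d$, which changes neither $\dimA(E)$ nor the dimension conservation property and only enlarges the family of micro-sets of $E$; so there is no loss of generality. Write $\dim^*(E):=\sup\{\dimH(C)\mid C\text{ is a micro-set of }E\}$. The goal is to produce a single micro-set $B$ of $E$ with $\dimH(B)=\dim^*(E)\ge\dimA(E)$ which is homogeneous in the sense that $\dimH(C)=\dimH(B)$ for every micro-set $C$ of $B$, and then to invoke \cite[Theorem 6.1]{Fu08} for such a $B$.

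The first step is the inequality $\dim^*(E)\ge\dimA(E)$, which is the weak-tangent description of the Assouad dimension. Indeed, if $t<\dimA(E)$, then the failure of the Assouad condition produces points $z_n\in E$ and scales $R_n\downarrow 0$, $r_n\downarrow 0$ with $R_n/r_n\to\infty$ such that $(E-z_n)/R_n$, suitably translated into $[0,1]^d$, contains at least $n\,(r_n/R_n)^{-t}$ points that are $(r_n/R_n)$-separated; passing to a Hausdorff limit of these mini-sets yields a micro-set $T$ with $\udimB(T)\ge t$. The second step, which I would import from the compactness machinery of \cite{Fu08}, is that $E$ has a micro-set $B$ realizing $\dim^*(E)$ that is moreover homogeneous: the construction is a diagonal extraction in the space of compact subsets of $[0,1]^d$ under the Hausdorff metric, using the key fact that a micro-set of a micro-set of $E$ is again a micro-set of $E$, together with the selection of $B$ so as to minimize the spread of Hausdorff dimensions among its own micro-sets. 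For a homogeneous set one has $\dimH(B)=\udimB(B)$, which in particular upgrades the box-dimension bound of the first step to $\dimH(B)=\dim^*(E)\ge\dimA(E)$.

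Finally, apply \cite[Theorem 6.1]{Fu08} to the homogeneous micro-set $B$: it furnishes a magnification-invariant ergodic measure carried by micro-sets of $B$, all of Hausdorff dimension $\dimH(B)$, for which the prescribed projection $P_\theta$ transports dimension through its fibres in the sense of \eqref{DC}; hence $P_\theta$ is dimension conserving for $B$. Since $B$ is a micro-set of $E$ with $\dimH(B)=\dim^*(E)\ge\dimA(E)$, this is exactly the assertion of the theorem. The main obstacle is this last step — extracting from the magnification dynamics an invariant measure that pushes the full dimension through one fixed projection — but that is precisely the ergodic-theoretic content of \cite[Theorem 6.1]{Fu08}, which we are free to use as a black box; what remains is the routine translation between its hypotheses (stated in terms of galleries and invariant measures) and the set-theoretic formulation used here, together with the elementary facts about micro-sets recorded above.
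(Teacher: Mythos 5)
Your argument follows essentially the same route as the paper, which proves Theorem~\ref{thm-DC} by citing \cite[Theorem 6.1]{Fu08} together with Remark~\ref{assouaddim}: micro-sets realize the Assouad dimension (\cite[Theorem 2.4]{FHKY}), and Furstenberg's homogeneous-micro-set machinery then produces a micro-set of at least that Hausdorff dimension for which the projection is dimension conserving. The only step where your sketch is too casual is the claim that a naive Hausdorff limit of the rescaled separated configurations has upper box dimension at least $t$ --- the separated points can concentrate and collapse in the limit, so the correct statement (that a suitably chosen weak tangent has \emph{Hausdorff} dimension at least $t$) needs the more careful selection carried out in \cite[Theorem 2.4]{FHKY}; since that is exactly the result the paper imports and you may cite it as the ``weak-tangent description'' you invoke, this does not affect the validity of the overall argument.
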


\begin{remark}\label{assouaddim}
In the statement of \cite[Theorem 6.1]{Fu08}, there is $\dimH(E)$ instead of
$\dimA(E)$. However, the proof of \cite[Theorem 6.1]{Fu08} works for the
Assouad dimension as is evident from the remarks before
\cite[Proposition 6.1]{Fu08}. Recall also \cite[Theorem 2.4]{FHKY}, which states
that the Assouad dimension of a set equals the dimension of the gallery of its
micro-sets.
\end{remark}  

In the next lemma, we state some basic properties needed in the proof of
Theorem~\ref{main}. Recall that a homothety is a map $h\colon\R^d\to\R^d$,
where $h(x):=r(x-t)+t$ with $r\in\R\setminus\{0\}$ and $t\in\R^d$.

\begin{lemma}\label{basic}
Fix $\theta\in S^{d-1}$, $y\in\R^d$, $g\in\textrm{O}(d,\R)$ and a homothety $h$.
Let $F\subset\R^d$ be a nonempty compact set. Then
\begin{enumerate}
  \item $h(V_{\theta,y}(F))=V_{\theta,h(y)}(h(F))$, \label{basica} 
  \item $g\circ h(F)$ and $h\circ g(F)$ differ only by a translation and
    \label{basicb}
  \item $V_{\theta,y}(g(F))=g(V_{g^{-1}(\theta),g^{-1}(y)}(F))$. \label{basicc}
\end{enumerate}
\end{lemma}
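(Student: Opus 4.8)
The plan is to deduce all three items directly from definition \eqref{visibledef}, exploiting only that homotheties and elements of $\textrm{O}(d,\R)$ are similarity transformations: both are affine, hence send line segments to line segments, and both scale the Euclidean metric by a constant factor, hence carry the nearest-point projection onto an affine hyperplane to the nearest-point projection onto the image hyperplane. I would dispose of the purely algebraic item \eqref{basicb} first: writing $h(x)=rx+(1-r)t$ one computes $g\circ h(x)=rg(x)+(1-r)g(t)$ and $h\circ g(x)=rg(x)+(1-r)t$, so that $g\circ h(x)-h\circ g(x)=(1-r)(g(t)-t)$ does not depend on $x$; hence $g\circ h(F)$ and $h\circ g(F)$ differ only by the translation by the vector $(1-r)(g(t)-t)$.

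The two facts I would isolate next, and which then force \eqref{basica} and \eqref{basicc}, are the intertwining relations
\[
  h(L_{\theta,y})=L_{\theta,h(y)},\qquad P_{\theta,h(y)}\circ h=h\circ P_{\theta,y}
\]
for a homothety $h$, and
\[
  g\bigl(L_{g^{-1}(\theta),\,g^{-1}(y)}\bigr)=L_{\theta,y},\qquad
  P_{\theta,y}\circ g=g\circ P_{g^{-1}(\theta),\,g^{-1}(y)}
\]
for $g\in\textrm{O}(d,\R)$. For the first pair, since $h(w)-h(y)=r(w-y)$ for every $w$, the defining condition $\langle w-y,\theta\rangle=0$ of $L_{\theta,y}$ is equivalent to $\langle h(w)-h(y),\theta\rangle=0$, which is the hyperplane identity; the projection identity follows because a similarity sends the point of $L_{\theta,y}$ closest to $z$ to the point of $h(L_{\theta,y})=L_{\theta,h(y)}$ closest to $h(z)$. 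For the second pair, orthogonality of $g$ gives $\langle g(w)-g(y'),g(\theta')\rangle=\langle w-y',\theta'\rangle$, so $g$ maps $L_{\theta',y'}$ onto $L_{g(\theta'),g(y')}$; specializing $\theta'=g^{-1}(\theta)$, $y'=g^{-1}(y)$ and using that $g$ is an isometry identifies the projections.

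Finally I would obtain \eqref{basica} and \eqref{basicc} by transporting the defining condition through the bijections $h$ and $g$. Concretely, if $z\in F$ and $[z,P_{\theta,y}(z)]\cap F=\{z\}$, then $h$ maps the segment $[z,P_{\theta,y}(z)]$ onto $[h(z),h(P_{\theta,y}(z))]=[h(z),P_{\theta,h(y)}(h(z))]$ by the relations above, so $h(z)\in h(F)$ and $[h(z),P_{\theta,h(y)}(h(z))]\cap h(F)=\{h(z)\}$, i.e.\ $h(z)\in V_{\theta,h(y)}(h(F))$; applying the same reasoning to $h^{-1}$ gives the opposite inclusion and hence \eqref{basica}, and running the identical argument with $g$ and with $\theta'=g^{-1}(\theta)$, $y'=g^{-1}(y)$ gives \eqref{basicc}. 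I do not expect a genuine obstacle here; the one step deserving a moment's attention is checking that a homothety of possibly negative ratio still preserves the normal direction $\theta$ and sends $L_{\theta,y}$ onto $L_{\theta,h(y)}$, which is precisely the first displayed identity.
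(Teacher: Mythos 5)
Your proof is correct, and it is precisely the ``straightforward calculations'' that the paper's one-line proof leaves to the reader: the computation for item (2), the intertwining identities $h(L_{\theta,y})=L_{\theta,h(y)}$, $P_{\theta,h(y)}\circ h=h\circ P_{\theta,y}$ and their orthogonal analogues, and the transport of the defining segment condition through the bijections $h$ and $g$ all check out (including the case of negative homothety ratio). No discrepancy with the paper's approach, since the paper supplies no details at all.
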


\begin{proof}
The claims follow from straightforward calculations.
\end{proof}  

We are now ready for the proof of Theorem~\ref{main}.

\begin{proof}[Proof of Theorem \ref{main}]
Fix $\theta\in S^{d-1}$ and $y\in\langle\theta\rangle$ such that
$L_{\theta,y}\cap K=\emptyset$. We will only deal with the case, where $K$ is
included in one of the components of $\R^d\setminus L_{\theta,y}$. The general
situation can be deduced from this special case as follows: Since
$L_{\theta,y}\cap K=\emptyset$, there is $k\in\N$ such that, for all
$u\in\Lambda^k$, the set $f_u(K)$ is included in one of the components of
$\R^d\setminus L_{\theta,y}$. 
Since $f_u$ is a similarity, we can write $f_u(K)$ as a composition
\begin{equation}\label{rotahomo}
  f_u(K)=g_u\circ h_u(K),
\end{equation}  
where $g_u\in G(\cF)$ is the orthogonal component of $f_u$ and $h_u$ is a homothety. 

Notice also that $f_u(K)$ is a self-similar set generated by the iterated
function system $\cF_u:=\{f_u\circ f_i\circ f_u^{-1}\mid i\in\Lambda\}$  that also satisfies the weak separation condition.     Clearly,
$G(\cF_u)=G(\cF)$ and
\[
 V_{\theta,y}(K)\subset\bigcup_{u\in\Lambda^k}V_{\theta,y}(f_u(K)).
\]
Assuming that the special case has been proved, one concludes by
Lemma~\ref{basic} that $\dimA(V_{\theta,y}(f_u(K)))$ is bounded by the upper bound
given in \eqref{eq:main}. Thus, the stability of the Assouad dimension under
finite unions implies that \eqref{eq:main} holds in the general situation.
 
Let $E=\overline{V_{\theta,y}(K)}$ be the closure of $V_{\theta,y}(K)$. By
Remark~\ref{dimpreserves}, for every $x\in E$ and $r>0$, there exist
$u_1,u_2,\ldots,u_M\in\Lambda^*$ satisfying $cr\le\diam([u_j])\le r$
for all $j=1,\dots,M$ such that
\[
 Q(x,r)\cap E\subset\bigcup_{j=1}^Mf_{u_j}(K)\cap E.
\]
Observe that $V_{\theta,y}(K)\cap f_{u_j}(K)\subset V_{\theta,y}(f_{u_j}(K))$ for
every $j\in\{1,\dots,M\}$. Therefore, by \eqref{rotahomo},
\begin{equation}\label{localsubset}
  Q(x,r)\cap E\subset\bigcup_{j=1}^M\overline{V_{\theta,y}(f_{u_j}(K))}
  =\bigcup_{j=1}^M\overline{V_{\theta,y}(g_{u_j}\circ h_{u_j}(K))}.
\end{equation}
If $A$ is a mini-set of $E$, then $A\subset h(Q(x,r)\cap E)$ for some
$x\in [0,1]^d$ and $0<r\le 1$, where $h$ is a homothety with ratio $r^{-1}$.
Combining \eqref{localsubset} and Lemma~\ref{basic}, we conclude that
\begin{equation}\label{visibleminiset}
  A\subset\bigcup_{j=1}^M\overline{V_{\theta,y'}(g_j\circ h_j(K))},
\end{equation}
where $g_j\in G(\cF)$, $y'\in\R^d$ and the ratio of the homothety $h_j$
is between $c$ and $1$ for all $j=1,\dots,M$. 
Let $B$ be a micro-set of $E$. Then $B=\lim_{n\to\infty} A_n$ for a sequence
$(A_n)$ of mini-sets of $E$. Applying \eqref{visibleminiset} to $A_n$ for all
$n\in\N$, we find a sequence of $M$-tuples of sets
$(g_1^n\circ h_1^n(K),\dots,g_M^n\circ h_M^n(K))$. Since $G(\cF)$
is finite, there is a constant subsequence $(g_1^{n_k},\dots,g_M^{n_k})$. Since
the ratios and translations of the homotheties $h_j^{n_k}$ are bounded, we
may choose a further subsequence such that
$(h_1^{n_{k_l}},\dots,h_M^{n_{k_l}})$ converges. Using
Lemma~\ref{basic}, we conclude that
\eqref{visibleminiset} is valid with a mini-set $A$ replaced by a micro-set $B$.

Now we apply Theorem~\ref{thm-DC} to $E$ in order to find a micro-set $B$ of
$E$ whose Hausdorff dimension is at least $\dimA(E)$ and for which $P_\theta$ is
dimension conserving. Applying \eqref{visibleminiset} to $B$, we conclude that
there exist $\delta\ge 0$ and $F:=\overline{V_{\theta,y'}(g_j\circ h_j(K))}$
such that
\begin{equation}\label{applyDC}
  \delta+\dimH\left\{x\in P_\theta(F)\mid\dimH\left(P_\theta^{-1}(x)\cap F\right)
  \ge\delta\right\}\ge\dimA(E).
\end{equation}
Let $\tau:=g_j^{-1}(\theta)$. Combining Lemma~\ref{basic} with \eqref{applyDC},
we deduce that
\begin{equation}\label{applyDC2}
  \delta+\dimH\left\{x\in P_\tau\left(\overline{V_{\tau,z}(K)}\right)\mid
  \dimH\left(P_\tau^{-1}(x)\cap \overline{V_{\tau,z}\left(K\right)}\right)
  \ge\delta\right\}\ge\dimA(E)
\end{equation}
for some $z\in\R^d$, which we may choose such that $K$ is included in one of the
components of $\R^d\setminus L_{\tau,z}$. Clearly, we may assume that
$\dimA(E)>d-1$. In view of \eqref{applyDC2}, the constant $\delta$ must be
strictly positive, since
$\dimH\left(P_\tau\left(\overline{V_{\tau,z}(K)}\right)\right)\le d-1$. Let 
\[
 D:=\left\{x\in P_\tau\left(\overline{V_{\tau,z}(K)}\right)\mid
 \dimH\left(P_\tau^{-1}(x)\cap\overline{V_{\tau,z}(K)}\right)\ge\delta\right\}.
\]
Since $\delta>0$, we can find, for every $\vep>0$, a nonempty subset
$D_\vep\subset D$ and $\gamma>0$ such that
\begin{equation}\label{projectionbound}
  \dimH(D_\vep)\ge\dimH(D)-\vep
\end{equation}
and, for every $x\in D_\vep$, there are
$C_x\subset P_\tau^{-1}(x)\cap\overline{V_{\tau,z}\left(K\right)}$ and
$w\in P_\tau^{-1}(x)\cap\overline{V_{\tau,z}(K)}$ with 
\begin{equation}\label{sectionbound}
  \dimH(C_x)\ge \delta-\vep
\end{equation}
and
\begin{equation}\label{positivedistance}
  \dist(w,L_{\tau,z})\ge \dist(w',L_{\tau,z})+\gamma\text{ for every }w'\in C_x.
\end{equation}

Fix such an $n_0\in\N$ that, for every $u\in\Lambda^{n_0}$, we have that
$\diam(f_u(K))\le\frac\gamma 2$. Let $x\in D_\vep$ and $w'\in C_x$. Since $K$ is
closed, $\overline{V_{\tau,z}(K)}\subset K$. Therefore, $w'\in f_u(K)$ for some
$u\in\Lambda^{n_0}$. If $P_\tau(w')\in\Int(P_\tau(f_u(K)))$, there is
no point $y\in\overline{V_{\tau,z}(K)}\cap P_\tau^{-1}(P_\tau(w'))$ with
$\dist(y,L_{\tau,z})>\dist(w',L_{\tau,z})+\diam(f_u(K))$ (recall that $K$ is
included in one of the components of $\R^d\setminus L_{\tau,z}$).
Inequality~\eqref{positivedistance} and the fact that
$w\in\overline{V_{\tau,z}(K)}$ imply that $w'\in\Pen_\tau(f_u(K))$. From
this we deduce that 
\begin{equation}\label{Cantorinclusion}
  \bigcup_{x\in D_\vep}\{C_x\}\subset\bigcup_{u\in\Lambda^{n_0}}\Pen_\tau(f_u(K)).
\end{equation}
Applying the classical Marstrand theorem \cite{Mar2} and
using the dimension bounds in \eqref{projectionbound} and
\eqref{sectionbound} together with \eqref{applyDC2}, we conclude that 
\[
 \dimH\left(\bigcup_{x\in D_\vep}\{C_x\}\right)\ge\dimH(D)+\delta-2\vep
 \ge\dimA(E)-2\vep.
\]
On the other hand, note that the Hausdorff dimension of the set on the right
hand side of \eqref{Cantorinclusion} is bounded above by
\[
 \max_{g\in G(\cF)}\dimH(\Pen_{g(\theta)}(K)).
\]
Since $\vep$ is arbitrary, we get that $\dimA(E)$ is bounded by the above
quantity, which is the desired conclusion.
\end{proof}

Our second theorem states that if the projection of a self-similar set with a
finite rotation group contains interior points, the dimension of the
visible part is strictly smaller than the dimension of the set itself. 

\begin{theorem}\label{theorem2}
Let $K\subset\R^d$ be a self-similar set generated by an iterated function
system $\cF$ with a finite rotation group $G(\cF)$ and satisfying the  weak separation condition. Assume that $\dimH(K)>d-1$. Fix $\theta\in S^{d-1}$. Suppose that
$\Int(P_\theta(K))\ne\emptyset$. Then $\dimA(V_{\theta,y}(K))<\dimH(K)$ for all
$y\in\langle\theta\rangle$ with $L_{\theta,y}\cap K=\emptyset$.
\end{theorem}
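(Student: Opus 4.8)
The plan is to combine Theorem~\ref{main} with a quantitative strengthening that exploits the hypothesis $\Int(P_\theta(K))\ne\emptyset$. By Theorem~\ref{main}, it suffices to show that
\[
 \max\{d-1,\ \max_{g\in G(\cF)}\dimH(\Pen_{g(\theta)}(K))\}<\dimH(K).
\]
Since we assume $\dimH(K)>d-1$, the first term is already strictly smaller, so the whole task reduces to proving the single inequality $\dimH(\Pen_{g(\theta)}(K))<\dimH(K)$ for each of the finitely many $g\in G(\cF)$. Fix such a $g$ and write $\tau=g(\theta)$; note that $\Int(P_\tau(K))\ne\emptyset$ as well, because $P_\tau(K)=P_\theta(g^{-1}(K))$ and $g^{-1}(K)$ is a self-similar set whose IFS has the same (finite) rotation group, so one may as well prove the statement for $\tau=\theta$ directly. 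The key geometric observation is that $\Pen_\theta(K)$ lies above the part of $K$ that projects into the \emph{boundary} of $P_\theta(K)$, and under the open set condition this is a ``thin'' subset of $K$: its projection $P_\theta(\Pen_\theta(K))\subset\partial P_\theta(K)$ has empty interior, so it misses a whole cylinder $P_\theta(f_u(K))$ that sits inside $\Int(P_\theta(K))$.

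The main step is therefore to show: if $W\subset K$ is the set of points whose $P_\theta$-image lies in a fixed closed set $S\subsetneq P_\theta(K)$ with $\Int(P_\theta(K))\setminus S\ne\emptyset$, then $\dimH(W)<\dimH(K)$. I would argue this by a standard ``uniform deficiency down the tree'' estimate. Because $\Int(P_\theta(K))\setminus S$ is open and nonempty and $P_\theta(K)=\bigcup_i P_\theta(f_i(K))$, there is some finite word $v\in\Lambda^m$ with $P_\theta(f_v(K))\subset \Int(P_\theta(K))\setminus S$; hence $f_v(K)$ is disjoint from $W$. By self-similarity, for \emph{every} word $u\in\Lambda^*$ the cylinder $f_{uv}(K)$ is disjoint from $W\cap f_u(K)$ (here one uses that each $f_u(K)$ is a homothetic-rotated copy of $K$ and, via Lemma~\ref{basic}\eqref{basicb} and finiteness of $G(\cF)$, that the relevant projected-boundary structure is preserved up to one of finitely many rotations — so a fixed finite collection of words $v_1,\dots,v_N$, one for each $g\in G(\cF)$, does the job uniformly). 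Consequently, at each generation the ``surviving'' cylinders of $W$ form a subtree of $\Lambda^\N$ in which, from every node, one of finitely many fixed descendants at bounded depth has been removed. Using the open set condition to convert cylinder counts into a Hausdorff-dimension bound (as in Remark~\ref{dimpreserves}), the pressure/zeta-function computation gives a uniform gap: the dimension of $\pi^{-1}(W)$, and hence of $W$, is at most $\dimH(K)-\eta$ for some $\eta>0$ depending only on $\cF$, $G(\cF)$ and the word length of the $v_i$.

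I expect the main obstacle to be the bookkeeping in the previous paragraph: one must verify that a \emph{single} finite family of "escape words" works simultaneously for all cylinders $f_u(K)$, which requires tracking how the projection direction is rotated when one passes from $K$ to $f_u(K)=g_u\circ h_u(K)$ and checking that $P_\theta(f_u(\,\cdot\,))$ relates to $P_{g_u^{-1}(\theta)}(h_u(\,\cdot\,))$ via Lemma~\ref{basic}. Since $G(\cF)$ is finite there are only finitely many directions $g^{-1}(\theta)$, $g\in G(\cF)$, to consider, and for each such direction $\tau'$ the hypothesis $\Int(P_{\tau'}(K))\ne\emptyset$ still holds (again because $K$ up to rotation has the same IFS structure), so an escape word $v_{\tau'}$ exists; taking $N=\max_{\tau'}|v_{\tau'}|$ and filling in arbitrary letters makes all these words the same length, which is convenient for the counting argument. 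The only other point needing care is that $\partial P_\theta(K)$ genuinely has empty interior in $L_\theta$ — but this is immediate since $P_\theta(K)$ is compact, so $\Int(P_\theta(K))$ is dense in $P_\theta(K)\setminus\partial P_\theta(K)$ is not needed; rather one simply notes that a closed set whose boundary contained an interior point would have that point in its interior, a contradiction. With the dimension gap for $\Pen_\theta(K)$ in hand, Theorem~\ref{main} finishes the proof.
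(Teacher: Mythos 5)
Your overall strategy coincides with the paper's: reduce via Theorem~\ref{main} to showing $\dimH(\Pen_{g(\theta)}(K))<\dimH(K)$ for each $g\in G(\cF)$, and then exhibit, for each direction in the orbit of $\theta$ under $G(\cF)$, an ``escape'' cylinder whose projection lands inside the interior of the projection, so that from every node of the coding tree a descendant of bounded depth is deleted, which forces a uniform exponential deficiency in the covering count and hence a dimension gap. The paper packages the orbit bookkeeping as a graph-directed system with vertex set $G(\cF)$ and treats $(\Pen_{g(\theta)}(K))_{g\in G(\cF)}$ as a subattractor list, but that is essentially presentation; your hands-on version with escape words $v_1,\dots,v_N$ indexed by $G(\cF)$ is the same mechanism.

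There is, however, one genuine gap. The existence of the escape word $v_g$ requires $\Int(P_{g(\theta)}(K))\ne\emptyset$ for \emph{every} $g\in G(\cF)$, whereas the hypothesis only provides this for $g=\Id$. Your justification --- that $P_{g(\theta)}(K)$ agrees (up to an isometry of the target hyperplane) with $P_\theta(g^{-1}(K))$ and that $g^{-1}(K)$ ``has the same IFS structure'' --- does not close this: it merely translates the claim about the pair $(K,g(\theta))$ into the equivalent claim about $(g^{-1}(K),\theta)$, and $g^{-1}(K)$ is a different set from $K$, so the hypothesis $\Int(P_\theta(K))\ne\emptyset$ says nothing about it. The correct argument, which is \eqref{intinvariant} in the paper, is dynamical: if $P_{g(\theta)}(x)\in\Int(P_{g(\theta)}(K))$ for some $x\in K$, then, since $f_i$ maps $K$ into $K$ and intertwines $P_{g(\theta)}$ with $P_{O_i\circ g(\theta)}$ up to a similarity of the target hyperplane, one gets $P_{O_i\circ g(\theta)}(f_i(x))\in\Int(P_{O_i\circ g(\theta)}(K))$; and because $G(\cF)$ is finite, the semigroup generated by $\{O_i\mid i\in\Lambda\}$ is all of $G(\cF)$, so nonemptiness of the interior propagates from $\theta$ to the entire orbit $\{g(\theta)\mid g\in G(\cF)\}$. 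Once this lemma is inserted, your argument goes through; the remaining loose end (the pressure/counting step turning the per-node deletion into a strict dimension drop, using the open set condition and $\dimH(K)=\dimB(K)$) is carried out in the paper at essentially the same level of detail you indicate.
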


\begin{proof}
By Theorem~\ref{main}, it is enough to show that
\begin{equation}\label{boxbound}
  \max_{g\in G(\cF)}\dimH(\Pen_{g(\theta)}(K))<\dimH(K).
\end{equation}
We do that using a porosity argument. Recall that a
set $A$ in a metric space $Z$ is porous in $Z$ if there exists a constant $c>0$
so that, for every ball $B(z,r)$ in $Z$, there exists a ball
$B(z_1,cr)\subset B(z,r)$ satisfying $B(z_1,cr)\cap A=\emptyset$.
It is well known (see \cite[Lemma 3.12]{BHR}) that if the space
$Z$ is Ahlfors regular, the inequality $\dimA A\le \dimH Z-\delta$ is valid for
every porous set $A$ in $Z$, where $\delta>0$ is a constant depending only on
$Z$ and the parameter $c$ involved in the definition of porosity of $A$ in $Z$.
Since $\dimH(K)>d-1$, the attractor $K$ is not contained in a
hyperplane. Further, as $\cF$ satisfies the weak separation condition, the
self-similar set $K$ is Ahlfors regular by \cite[Theorem 2.1]{FHOR}. Thus, to
prove \eqref{boxbound}, we only need to show that, for every $g\in G(\cF)$, the
set $\Pen_{g(\theta)}(K)$ is porous in $K$.

To that end, observe first that, for all $g\in G(\cF)$, 
\begin{equation}\label{intinvariant}
  \begin{split}
    &\text{if }P_{g(\theta)}(x)\in\Int(P_{g(\theta)}(K))\text{ for }x\in K,\\
    &\text{then }P_{O_i\circ g(\theta)}(f_i(x))\in\Int(P_{O_i\circ g(\theta)}(K))
    \text{ for all }f_i\in\cF.
  \end{split}
\end{equation}
As $\Int(P_\theta(K))\ne\emptyset$, we have that
$\Int(P_{g(\theta)}(K))\ne\emptyset$ for all $g\in G(\cF)$ by \eqref{intinvariant}
and the finiteness of $G(\cF)$. Further, there are $r>0$ and points
$x_g\in P_{g(\theta)}(K)$ for all $g\in G(\cF)$ such that
$B(x_g,r)\subset\Int(P_{g(\theta)}(K))$. Thus, for
all $g\in G(\cF)$, one can find a finite word $u_g\in\Lambda^*$ so that
$P_{g(\theta)}(f_{u_g}(K))\subset B(x_g,r)$. From this we infer that 
\begin{equation}\label{eq:proof-theorem2-1}
  P_ {g(\theta)}\Bigl(\Conv\bigl(f_{vu_{O_v^{-1}\circ g}}(K)\bigr)\Bigr)
  \subset\Int(P_{g(\theta)}(K))
\end{equation}
for all $g\in G(\cF)$ and for all finite words $v\in\Lambda^*$, where
$f_v(x)=r_vO_v(x)+t_v$ is the decomposition of $f_v$ and $\Conv(A)$
denotes the convex hull of a set $A$.
The inclusion \eqref{eq:proof-theorem2-1} clearly implies that
\begin{equation}\label{eq:proof-theorem2-2}
 \Conv\bigl(f_{vu_{O_v^{-1}\circ g}}(K)\bigr)\cap\Pen_{g(\theta)}(K)=\emptyset.
\end{equation}
Since $G(\cF)$ is finite, $n_0:=\max\{|u_g|\mid g\in G(\cF)\}<\infty$.  Using
this fact and \eqref{eq:proof-theorem2-2}, it is now readily checked that
$\Pen_{g(\theta)}(K)$ is porous in $K$.
\end{proof}  

We complete this section with an observation connecting the weak separation
condition of projections to the dimensions of visible parts. 

\begin{proposition}\label{WSC}
Let $K\subset\R^d$ be a self-similar set generated by an iterated function
system $\cF:=\{f_i\}_{i\in \Lambda}$ with $G(\cF)=\{\Id\}$. Suppose that, for some
$\theta\in S^1$, the projection $P_\theta(K)$ satisfies the weak separation
condition. Then, for all $y\in\langle\theta\rangle$ with
$L_{\theta,y}\cap K=\emptyset$, we have that 
\[
\udimB(V_{\theta,y}(K))=\dimH(P_\theta(K)).
\]
\end{proposition}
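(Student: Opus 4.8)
The plan is to bound $\udimB(V_{\theta,y}(K))$ from above and below by $\dimH(P_\theta(K))$. Since $P_\theta(V_{\theta,y}(K))=P_\theta(K)$ and projections do not increase dimension, the lower bound $\udimB(V_{\theta,y}(K))\ge\dimH(P_\theta(K))$ is immediate from $\dimH\le\udimB$ together with the fact that $\udimB$ of a set dominates $\udimB$ of its projection; more precisely $\dimH(P_\theta(K))\le\udimB(P_\theta(K))\le\udimB(V_{\theta,y}(K))$. So the content is entirely in the upper bound $\udimB(V_{\theta,y}(K))\le\dimH(P_\theta(K))$.

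For the upper bound I would exploit that $G(\cF)=\{\Id\}$, so $P_\theta(K)$ is itself a self-similar set in $\R$ generated by the iterated function system $\{x\mapsto r_i x+P_\theta(t_i)\}_{i\in\Lambda}$, and by hypothesis it satisfies the weak separation condition. A set $z\in V_{\theta,y}(K)$ is, by definition, the point of $K\cap P_\theta^{-1}(P_\theta(z))$ that is extremal (closest to $L_{\theta,y}$) in the $\theta$-direction; since $K$ is included in one component of $\R^d\setminus L_{\theta,y}$ (reduce to this case as in the proof of Theorem~\ref{main}), for each projected point there is exactly one visible point, so $V_{\theta,y}(K)$ is the graph of a function over $P_\theta(K)$. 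The strategy is then a covering argument: fix a small scale $\rho>0$. Because $P_\theta(K)$ satisfies the weak separation condition, the standard WSC machinery (see \cite{LN,Z}) gives that $P_\theta(K)$ has a box counting dimension $s:=\dimB(P_\theta(K))=\dimH(P_\theta(K))$ and, crucially, that at scale $\rho$ the set of ``essentially distinct'' cylinders $f_u$ with $\diam(P_\theta f_u(K))\approx\rho$ meeting a fixed $\rho$-ball is bounded by an absolute constant $N$ (the WSC bounds the multiplicity of the natural cover of $P_\theta(K)$). I would lift this: a $\rho$-ball $Q$ in $L_\theta$ pulls back to a slab $P_\theta^{-1}(Q)$, and $V_{\theta,y}(K)\cap P_\theta^{-1}(Q)$ is covered by the images $f_u(K)$ of those boundedly-many cylinders (using Remark~\ref{dimpreserves}-type covering to control the fibre direction: inside one cylinder $f_u$ with $\diam\approx\rho$, the visible part is contained in $f_u(V_{\theta',y'}(K))$ for a rotated direction — but here there is no rotation, so it is just a rescaled copy of a visible part — and one iterates). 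The key point is that going down one ``generation'' of scale from $\rho$ to $r_{\min}\rho$ multiplies the covering number of $V_{\theta,y}(K)$ by at most (number of children in $P_\theta(K)$)$\times N$, i.e.\ it grows at the same exponential rate as the covering number of $P_\theta(K)$, with no extra factor coming from the fibre direction.

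More concretely, I would set up a recursion: let $N(\rho)$ be the minimal number of $\rho$-cubes needed to cover $V_{\theta,y}(K)$. Decompose $V_{\theta,y}(K)$ along the cylinder structure of $K$ at the generation where diameters first drop below $\rho$. For each such cylinder word $u$, $f_u(K)\cap V_{\theta,y}(K)\subset f_u(V_{\theta,y}(K))$ by the analogue of the inclusion used in the proof of Theorem~\ref{main}, and $\diam f_u(K)\le\rho$, so each contributes $O(1)$ cubes. The number of such words $u$ that are needed is, after discarding those whose $P_\theta$-images coincide (WSC lets us collapse these: $f_u^{-1}\circ f_v$ near the identity in the projected system would contradict WSC, except for a bounded multiplicity), comparable to the number of $\rho$-intervals covering $P_\theta(K)$, which is $\asymp\rho^{-s+o(1)}$. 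Hence $N(\rho)\le C\rho^{-s+o(1)}$, giving $\udimB(V_{\theta,y}(K))\le s=\dimH(P_\theta(K))$ and completing the proof when combined with the lower bound.

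The main obstacle I anticipate is making rigorous the step where cylinders of $K$ with the same $P_\theta$-image are collapsed: the WSC is a hypothesis on the projected self-similar set $P_\theta(K)$, not on $K$ itself, so one must be careful that two cylinder words $u\ne v$ with $\diam f_u(K)\approx\diam f_v(K)\approx\rho$ and $P_\theta f_u(K)\cap P_\theta f_v(K)\ne\emptyset$ indeed fall into boundedly many equivalence classes — this is exactly where the weak separation condition enters, and one needs the (standard but slightly technical) fact that WSC for a self-similar set is equivalent to the bounded-neighbourhood / finite-type-like property that at every scale the natural cover has bounded overlap, plus the observation that the vertical extent $f_u(K)$ over each such collapsed class is still only $O(\rho)$ in diameter so contributes $O(1)$ to the cube count. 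Once that combinatorial bound is in hand, the rest is routine iteration of the self-similar structure and the elementary facts $\dimH\le\udimB$ and $\udimB(P_\theta(A))\le\udimB(A)$.
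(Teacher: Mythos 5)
Your overall strategy --- prove the easy lower bound, then run a covering count at scale $\rho$ and use the weak separation condition to bound the number of ``essentially distinct'' scale-$\rho$ cylinders sitting above a fixed $\rho$-interval of $L_\theta$ --- is a reasonable direct (quantitative) version of the paper's contradiction argument, and the lower bound is fine. But there is a genuine gap at the one step where visibility actually has to enter. The WSC for $P_\theta(K)$ bounds the number of \emph{distinct projected maps} $P_\theta\circ f_u$ with $r_u\approx\rho$ whose images meet a fixed $\rho$-interval; it says nothing about the number of \emph{words} $u$ realizing one and the same projected map, which can be unbounded. Two words $u\ne v$ with $P_\theta\circ f_u=P_\theta\circ f_v$ give sets $f_v(K)=f_u(K)+c\theta$, i.e.\ translates of one another in the $\theta$-direction, and the translates in a single class can be spread over the full height of $K$. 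So your justification for discarding all but one word per class --- that ``the vertical extent over each such collapsed class is still only $O(\rho)$'' --- is false: the union over one class may require a number of $\rho$-cubes growing like a power of $1/\rho$ (for the four-corner Cantor set with ratio $\tfrac14$ and $\theta=(0,1)$, each class at generation $n$ has $2^n$ members filling a whole vertical column). Without a further argument your count only bounds the covering number of $K$ itself, which would yield the absurd conclusion $\udimB(K)\le\dimH(P_\theta(K))$ in that example ($1\le\tfrac12$).

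The missing idea is occlusion: if $f_v(K)=f_u(K)+c\theta$ with $c\ne0$ and $K$ lies on one side of $L_{\theta,z}$, then every point of the farther translate has a point of $K$ strictly between it and $L_{\theta,y}$ in the same fibre, so at most one member of each class meets $V_{\theta,y}(K)$ at all. This is precisely the final step of the paper's proof (``either $f_u(K)\cap V_{\theta,y}(K)=\emptyset$ or $f_v(K)\cap V_{\theta,y}(K)=\emptyset$''), which the paper reaches by contradiction and compactness rather than by a direct count: assuming $\udimB(V_{\theta,y}(K))>\dimB(P_\theta(K))$, one finds arbitrarily many pairwise separated cylinders at comparable scale meeting the visible part with projections $\delta$-close in Hausdorff distance; a pigeonhole then produces $(P_\theta\circ f_u)^{-1}\circ(P_\theta\circ f_v)$ arbitrarily close to the identity, the WSC upgrades this to an exact coincidence $P_\theta\circ f_u=P_\theta\circ f_v$ with $u\ne v$, and occlusion gives the contradiction. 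Once you insert the occlusion step your covering version goes through, modulo making precise the standard ``bounded multiplicity'' consequence of the WSC that you invoke (which the paper avoids by working directly from the definition).
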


\begin{proof}
As in the proof of Theorem~\ref{main}, we may assume that $K$ is included in one
of the components of $\R^d\setminus L_{\theta,y}$. Fix $y\in\langle\theta\rangle$
with $L_{\theta,y}\cap K=\emptyset$. Recall that the Hausdorff and box counting
dimensions are equal for every self-similar set, see \cite[Corollary 3.3]{F97}.
The fact that $G(\cF)=\{\Id\}$ implies that $(P_\theta\circ f)_u=P_\theta\circ f_u$
for all $u\in\Lambda^*$ and $P_\theta(K)$ is a self-similar set generated by the
iterated function system $\{P_\theta\circ f_i\}_{i\in \Lambda}$. Thus,
$\dimH(P_\theta(K))=\dimB(P_\theta(K))$. Let
$\rmin:=\min\{r_i\mid i\in\Lambda\}$. If the claim is not true, then
$\udimB(V_{\theta,y}(K))>\dimB(P_\theta(K))$. Therefore, for every $M\ge 1$, there are
arbitrarily small numbers $\delta>0$ and sets $I_{\delta,M}\subset\Lambda^*$ with
cardinality at least $M$ such that the following properties hold: for all
$u\in I_{\delta,M}$,
\[
f_u(K)\cap V_{\theta,y}(K)\ne\emptyset\text{ and }
\delta\le\diam(f_u(K))\le\rmin^{-1}\delta
\]
and, moreover, for all $u,v\in I_{\delta,M}$ with $u\ne v$, 
\[
\dist(f_u(K),f_v(K))>\diam(f_u(K))\text{ and }
\dH\bigl(P_\theta\circ f_u(K),P_\theta\circ f_v(K)\bigr)\le\delta,
\]
where the Hausdorff metric is denoted by $\dH$.

Now all the sets $P_\theta\circ f_u(K)$ are included in a
ball, whose radius is comparable to $\delta$. Further, they are
homothetic copies of each other, since $G(\cF)=\{\Id\}$, and their diameters are
comparable to $\delta$. Therefore, there exist $\vep:=\vep(M,\rmin,d)$
with $\lim_{M\to\infty}\vep=0$ and $u,v\in I_{\delta,M}$ such that 
\[
|P_\theta\circ f_u(z)-P_\theta\circ f_v(z)|\le\vep\diam(f_u(K))\text{ for all }
  z\in K,
\]
that is, there exists a sequence
$\bigl((P_\theta\circ f_{u_n})^{-1}\circ(P_\theta\circ f_{v_n})\bigr)_{n\in\N}$
converging to $\Id$. Under the weak separation condition for $P_\theta(K)$, there
exist $u\ne v$ such that $P_\theta\circ f_u=P_\theta\circ f_v$. Since
$\dist(f_u(K),f_v(K))>\diam(f_u(K))$ and $f_u(K)$ and $f_v(K)$ are included in
the same component of $\R^d\setminus L_{\theta,y}$, it follows that either
$f_u(K)\cap V_{\theta,y}(K)=\emptyset$ or $f_v(K)\cap V_{\theta,y}(K)=\emptyset$, which is
a contradiction.
\end{proof}

\section{Examples}\label{examples}

In this section, we present some examples illustrating the role of assumptions
in our theorems. One may ask, if it is necessary to take the maximum over the
dimensions of the penetrable parts over the orbit of $\theta$ under the
rotation group in Theorem~\ref{main} or if the dimension is constant along this
orbit. We show that the dimension of the penetrable part is not necessarily
constant. Recall the the open set condition implies the weak
separation condition.

\begin{example}\label{Meng}
Let $\cF:=\{f_1,\dots,f_5\}$ be an iterated function system on $\R^2$, where
$\{f_1,\dots,f_4\}$ generates the standard four corner Cantor set with
contraction ratio $\lambda\ge\frac 13$ and $f_5$ contracts by
$\rho<\frac{1-2\lambda}{\sqrt 2}$, rotates by an
angle $\frac{\pi}4$ and has the centre of $[0,1]^2$ as the fixed point, see
Figure~\ref{Mengfig}.
\begin{figure}
\begin{center}
\begin{tikzpicture}[xscale=2,yscale=2]
\draw[dashed] (0,0) -- (3,0) -- (3,3) -- (0,3) -- (0,0);
\draw[-] (0,0) -- (1,0) -- (1,1) -- (0,1) -- (0,0);
\draw[-] (0,2) -- (1,2) -- (1,3) -- (0,3) -- (0,2);
\draw[-] (2,0) -- (3,0) -- (3,1) -- (2,1) -- (2,0);
\draw[-] (2,2) -- (3,2) -- (3,3) -- (2,3) -- (2,2);
\draw[-] (1.5,1.3) -- (1.7,1.5) -- (1.5,1.7) -- (1.3,1.5) -- (1.5,1.3);
\draw[] (0.5,0.5)  node {$1$};
\draw[] (2.5,0.5)  node {$2$};
\draw[] (2.5,2.5)  node {$3$};
\draw[] (0.5,2.5)  node {$4$};
\draw[] (1.5,1.5)  node {$5$};
\end{tikzpicture}
\caption{Images of the unit square under the maps $f_1,\dots,f_5$ in
  Example~\ref{Meng}.}\label{Mengfig}
\end{center}
\end{figure}
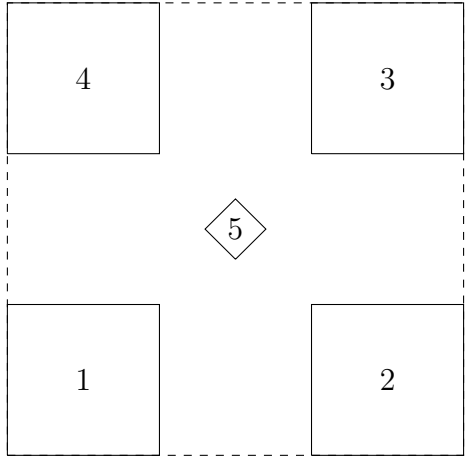
Then $\cF$ satisfies the open set condition and
$G(\cF)=\{O(\frac{k\pi}4)\mid k=0,\dots,7\}$, where $O(\alpha)$ is the rotation
by angle $\alpha$. Let $K$ be the attractor of $\cF$. Then
$P_{O(\frac{k\pi}4)(1,0)}(K)$ is an interval for odd $k$ and, for even $k$,
it is the union of the $\lambda$-Cantor set on the line and the
countable union of intervals, which are the projections of the squares
$f_u\circ f_5([0,1]^2)$, where $u\in\{1,2,3,4\}^*$. Observe that the
$\lambda$-Cantor set consists of the limit points of the end points of the
intervals. In particular, $\Pen_{O(\frac{(2k+1)\pi}4)(1,0)}(K)$ consists of two
opposite corner points of $[0,1]^2$, while $\Pen_{O(\frac{2k\pi}4)(1,0)}(K)$ is the
union of the standard four corner Cantor set generated by $\{f_1,\dots,f_4\}$
and the left and right endpoints of the squares $f_u\circ f_5([0,1]^2)$, where
$u\in\{1,2,3,4\}^*$.
\end{example}

Next example shows that there are self-similar sets satisfying the open set
condition such that their projections are finite unions of intervals. This
property of the following example must be well know, but we could not find a
reference.

\begin{example}\label{fourcorner}
Consider the standard four corner Cantor set $K$ in $\R^2$ with contraction
ratio $\lambda\ge\frac 13$. Then, for all 
$\theta\in S^1\setminus\{\pm(1,0),\pm (0,1)\}$, the
projection $P_\theta(K)$ is a finite union of intervals. We describe them in the
case $\lambda=\frac 13$. Denote by $\alpha_\theta$ the angle which
$\theta\in S^1$ makes with the positive $x$-axis. By symmetry, it is enough to
consider the case $\alpha_\theta\in[0,\frac\pi 4]$. For all $k\in\N$, if
$\tan\alpha_\theta\in\mathopen[3^{-k},3^{-(k-1)}\mathclose[$, then $P_\theta(K)$ is a
union of $2^{k-1}$ intervals. Indeed, denoting by $K_{k-1}$ the approximation of
$K$ at construction level $k-1$, the projection $P_\theta(K_{k-1})$ is
a union of $2^{k-1}$ intervals. At level $k$, the projection of every level $k-1$
construction square is the union of two intervals, but the gap between these
intervals is covered by the projection of another level $k-1$ construction
square due to the choice of $\alpha_\theta$ and the fact that
$1-2\lambda\le\lambda$, that is, the gap between the intervals is smaller than
the length of the intervals, see Figure~\ref{fourcornerfig}.
\begin{figure}
\begin{center}
\begin{tikzpicture}[xscale=2,yscale=2]
\draw[dashed] (0,0) -- (3,0) -- (3,3) -- (0,3) -- (0,0);
\draw[dashed] (0,0) -- (1,0) -- (1,1) -- (0,1) -- (0,0);
\draw[dashed] (0,2) -- (1,2) -- (1,3) -- (0,3) -- (0,2);
\draw[dashed] (2,0) -- (3,0) -- (3,1) -- (2,1) -- (2,0);
\draw[dashed] (2,2) -- (3,2) -- (3,3) -- (2,3) -- (2,2);
\draw[-] (0,0) -- (0.33,0) -- (0.33,0.33) -- (0,0.33) -- (0,0);
\draw[-] (0.67,0) -- (1,0) -- (1,0.33) -- (0.67,0.33) -- (0.67,0);
\draw[-] (0.67,0.67) -- (1,0.67) -- (1,1) -- (0.67,1) -- (0.67,0.67);
\draw[-] (0,0.67) -- (0.33,0.67) -- (0.33,1) -- (0,1) -- (0,0.67);
\draw[-] (2,0) -- (2.33,0) -- (2.33,0.33) -- (2,0.33) -- (2,0);
\draw[-] (2.67,0) -- (3,0) -- (3,0.33) -- (2.67,0.33) -- (2.67,0);
\draw[-] (2.67,0.67) -- (3,0.67) -- (3,1) -- (2.67,1) -- (2.67,0.67);
\draw[-] (2,0.67) -- (2.33,0.67) -- (2.33,1) -- (2,1) -- (2,0.67);
\draw[-] (2,2) -- (2.33,2) -- (2.33,2.33) -- (2,2.33) -- (2,2);
\draw[-] (2.67,2) -- (3,2) -- (3,2.33) -- (2.67,2.33) -- (2.67,2);
\draw[-] (2.67,2.67) -- (3,2.67) -- (3,3) -- (2.67,3) -- (2.67,2.67);
\draw[-] (2,2.67) -- (2.33,2.67) -- (2.33,3) -- (2,3) -- (2,2.67);
\draw[-] (0,2) -- (0.33,2) -- (0.33,2.33) -- (0,2.33) -- (0,2);
\draw[-] (0.67,2) -- (1,2) -- (1,2.33) -- (0.67,2.33) -- (0.67,2);
\draw[-] (0.67,2.67) -- (1,2.67) -- (1,3) -- (0.67,3) -- (0.67,2.67);
\draw[-] (0,2.67) -- (0.33,2.67) -- (0.33,3) -- (0,3) -- (0,2.67);
\draw[-] (0,1) -- (3,1.8);
\draw[-] (2,0.33) -- (3,0.597);
\draw[-] (1,0) -- (3,0.533);
\draw[-] (0,0.33) -- (3,1.13);
\draw[] (0.5,0.5)  node {$1$};
\draw[] (2.5,0.5)  node {$2$};
\draw[] (2.5,2.5)  node {$3$};
\draw[] (0.5,2.5)  node {$4$};
\end{tikzpicture}
\caption{Projecting the four corner Cantor set.}\label{fourcornerfig}
\end{center}
\end{figure}
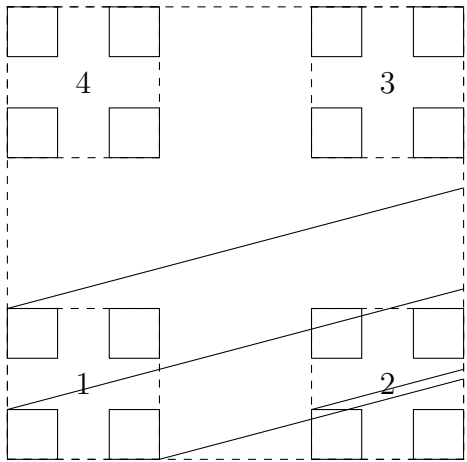
\end{example}

The projection of a self-similar set may be a countable union of
intervals. This is demonstrated by the following example.

\begin{example}\label{Ville}
Let $K$ be the attractor generated by an iterated function system
$\cF:=\{f_1,f_2,f_3\}$ on $[0,1]^2$, where $f_1$ and $f_2$ contract by the
factor $\frac 12$, $f_1$ translates by $(\frac 12,0)$, $f_2$ translates by
$(\frac 12,\frac 12)$ and $f_3$ contracts by $\frac 14$, rotates by an angle
$\frac{\pi}2$ and translates to the upper left corner. For an illustration of
the third construction step, see Figure~\ref{Villefig}. Now the projection of
$K$ onto the $y$-axis is the unit interval and onto the $x$-axis it is the union
$\bigcup_{k=0}^\infty [1-2^{-k},1-2^{-k}+2^{-k-2}]\cup\{1\}$.
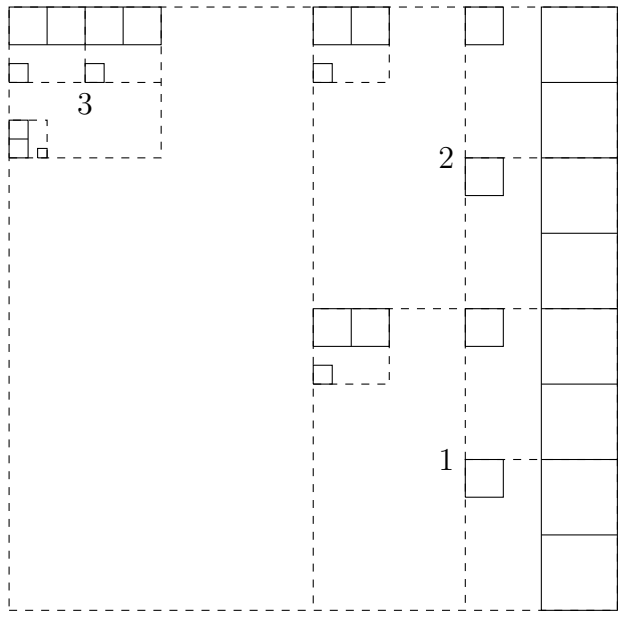
\begin{figure}
\begin{center}
\begin{tikzpicture}[xscale=2,yscale=2]
\draw[dashed] (0,0) -- (4,0) -- (4,4) -- (0,4) -- (0,0);
\draw[dashed] (4,2) -- (2,2) -- (2,0);
\draw[dashed] (2,4) -- (2,2);
\draw[dashed] (0,3) -- (1,3) -- (1,4);
\draw[dashed] (4,1) -- (3,1) -- (3,0);
\draw[dashed] (3,2) -- (3,1);
\draw[dashed] (2,1.5) -- (2.5,1.5) -- (2.5,2);
\draw[dashed] (4,3) -- (3,3) -- (3,2);
\draw[dashed] (3,4) -- (3,3);
\draw[dashed] (2,3.5) -- (2.5,3.5) -- (2.5,4);
\draw[dashed] (0,3.5) -- (1,3.5);
\draw[dashed] (0.5,3.5) -- (0.5,4);
\draw[dashed] (0.25,3) -- (0.25,3.25) -- (0,3.25);
\draw[-] (3.5,0) -- (4,0) -- (4,4) -- (3.5,4) -- (3.5,0);
\draw[-] (3.5,0.5) -- (4,0.5);
\draw[-] (3.5,1) -- (4,1);
\draw[-] (3.5,1.5) -- (4,1.5);
\draw[-] (3.5,2) -- (4,2);
\draw[-] (3.5,2.5) -- (4,2.5);
\draw[-] (3.5,3) -- (4,3);
\draw[-] (3.5,3.5) -- (4,3.5);
\draw[-] (3,0.75) -- (3.25,0.75) -- (3.25,1) -- (3,1) -- (3,0.75);
\draw[-] (3,1.75) -- (3.25,1.75) -- (3.25,2) -- (3,2) -- (3,1.75);
\draw[-] (3,2.75) -- (3.25,2.75) -- (3.25,3) -- (3,3) -- (3,2.75);
\draw[-] (3,3.75) -- (3.25,3.75) -- (3.25,4) -- (3,4) -- (3,3.75);
\draw[-] (2,1.75) -- (2.5,1.75) -- (2.5,2) -- (2,2) -- (2,1.75);
\draw[-] (2.25,1.75) -- (2.25,2);
\draw[-] (2,1.5) --(2.125,1.5) -- (2.125,1.625) -- (2,1.625) -- (2,1.5);
\draw[-] (2,3.75) -- (2.5,3.75) -- (2.5,4) -- (2,4) -- (2,3.75);
\draw[-] (2.25,3.75) -- (2.25,4);
\draw[-] (2,3.5) --(2.125,3.5) -- (2.125,3.625) -- (2,3.625) -- (2,3.5);
\draw[-] (0,3.75) -- (1,3.75) -- (1,4) -- (0,4) -- (0,3.75);
\draw[-] (0.25,3.75) -- (0.25,4);
\draw[-] (0.5,3.75) -- (0.5,4);
\draw[-] (0.75,3.75) -- (0.75,4);
\draw[-] (0,3.5) -- (0.125,3.5)-- (0.125,3.625) -- (0,3.625) -- (0,3.5);
\draw[-] (0.5,3.5) -- (0.625,3.5)-- (0.625,3.625) -- (0.5,3.625) -- (0.5,3.5);
\draw[-] (0,3) --(0.125,3) -- (0.125,3.25) -- (0,3.25) -- (0,3);
\draw[-] (0,3.125) -- (0.125,3.125);
\draw[-] (0.1875,3) -- (0.25,3) -- (0.25,3.0625) -- (0.1875,3.0625) --
         (0.1875,3);
\draw[-] (0,-0.1) -- (1,-0.1);
\draw[-] (2,-0.1) -- (2.5,-0.1);
\draw[-] (3,-0.1) -- (3.25,-0.1);
\draw[-] (3.5,-0.1) -- (3.625,-0.1);
\draw[] (3,1)     node[left]  {$1$};
\draw[] (3,3)     node[left]  {$2$};
\draw[] (0.5,3.5) node[below] {$3$};
\end{tikzpicture}
\caption{In Example~\ref{Ville}, the projection of the attractor onto the
  $x$-axis is a countable union of intervals.}
\label{Villefig}
\end{center}
\end{figure}
\end{example}

We remark that the different contraction ratios are not essential in
Example~\ref{Ville} and, with four maps, it is possible to construct a
homogeneous iterated function system having a projection which is a countable
union of intervals. Indeed, this is achieved by replacing the maps $f_1$ and
$f_2$ by three maps with contraction ratio $\frac 13$ in Example~\ref{Ville} and
using the same contraction $\frac 13$ also in the map $f_3$.

Our last example demonstrates the weak separation condition for projections.

\begin{example}\label{WSCexample}
Fix $n\in\N\setminus\{1\}$ and let $\cF$ be any nonempty subset of the family 
\[
\{(x,y)\mapsto\frac 1n(x,y)+\frac 1n(k,j)\mid k,j\in\N\cup\{0\}\text{ with }
0\le k, j \le n-1\}.
\]
Then $\cF$ generates a self-similar set $K$ on $[0,1]^2$
which is invariant under the multiplication by $n\mod 1$. Now the projection of
$K$ in any rational direction satisfies the weak separation condition, see
\cite[Remark 1]{Ru}. This is due to the fact that, for every rational $\theta$,
the orbit $\{k\theta\mod 1\mid k\in\N\}$ is finite. To be more precise about
the rational directions, observe that, for any vector $(a,b)\ne (0,0)$ with
$a,b\in\Z$, the map $(x,y)\mapsto ax+by$ defines a rational
projection up to scaling (which does not affect the dimension)
and every rational projection can be defined in this way. Thus, for this class
of "integral" self-similar sets, we can find a dense set of directions for
which the visible part problem is well understood due to Proposition~\ref{WSC}.
\end{example}

\begin{remark}\label{OSCneeded}
In the proof of Theorem~\ref{main}, the weak separation condition
guarantees that the
number $M$ in \eqref{localsubset} is independent of $r$. The finiteness of
$G(\cF)$, in turn, allows us to find a subsequence with a constant rotation part
in the argument after \eqref{visibleminiset}. Both these properties are
essential while we prove that the inclusion \eqref{visibleminiset} is valid also
for micro-sets. In Theorem~\ref{theorem2}, in addition to using
Theorem~\ref{main}, we apply the weak separation condition to conclude that the attractor $K$ is Ahlfors regular. In the proof of
Theorem~\ref{main}, we consider the closure of a visible part. Recall that the
restriction of the projection to a visible part is injective, but this is not
true for the closure. For example, let $F:=\{0\}\times [1,3]\cup\graph(f)$,
where $f\colon\mathopen]0,1\mathclose]\to\R$, $f(x)=\sin\frac 1x+2$, and
$\graph(f)$ is the graph of $f$. Then the visible part of $F$ from the $x$-axis
is $\{(0,1)\}\cup\graph(f)$, but its closure is $F$. So the closure of a visible
part may be much bigger than the visible part. However, Assouad dimension is an
upper bound for the box counting dimension, and the box counting dimension of
a set equals that of its closure. Thus, in this sense we do not loose anything
by considering the closure of a visible part. Nevertheless, we point out that
the upper bound in Theorem~\ref{main} is not always optimal. For example, in
Example~\ref{Meng}, the dimension of the penetrable part from the $x$-axis is
larger than one, but the visible part and its closure are 1-dimensional.
\end{remark}  

\bigskip

{\bf Acknowledgement} We thank Laurent Dufloux and Wen Wu for interesting
discussions. We also thank the referees for valuable comments,
especially pointing out the porosity argument for the proof of
Theorem~\ref{theorem2} and noting that only the weak separation condition is
needed.

\end{document}